\documentclass[11pt]{amsart}
  \usepackage{amsmath, amscd, amsthm, amssymb}
	\usepackage{xypic}
	\usepackage{cite, url}

 \topmargin -1.5cm        
 \oddsidemargin -0.04cm   
 \evensidemargin -0.04cm  
 \textwidth 16.59cm
 \textheight 21.94cm
 \parskip 7.2pt           
\newcommand{\SL}{{\mathrm{SL}}}
\newcommand{\PGO}{{\mathrm{PGO}}}
\newcommand{\SO}{{\mathrm{SO}}}
\newcommand{\Spin}{{\mathrm{Spin}}}
\newcommand{\Hsp}{{\mathrm{HSpin}}}
\newcommand{\PSp}{{\mathrm{PSp}}}
\newcommand{\Sp}{{\mathrm{Sp}}}
\newcommand{\Gm}{{\mathbb{G}_m}}
\newcommand{\Hom}{{\mathrm{Hom}}}
\newcommand{\Invd}{{\mathrm{Inv}^3(G,\mathbb{Q}/\mathbb{Z}(2))_{\mathrm{dec}}}}
\newcommand{\Invi}{{\mathrm{Inv}^3(G,\mathbb{Q}/\mathbb{Z}(2))_{\mathrm{ind}}}}
\newcommand{\Invn}{{\mathrm{Inv}^3(G,\mathbb{Q}/\mathbb{Z}(2))_{\mathrm{norm}}}}
\newcommand{\Invdp}{{\mathrm{Inv}^3(G',\mathbb{Q}/\mathbb{Z}(2))_{\mathrm{dec}}}}
\newcommand{\Invip}{{\mathrm{Inv}^3(G',\mathbb{Q}/\mathbb{Z}(2))_{\mathrm{ind}}}}
\newcommand{\Invnp}{{\mathrm{Inv}^3(G',\mathbb{Q}/\mathbb{Z}(2))_{\mathrm{norm}}}}
\newcommand{\Z}{\mathbb{Z}}
\newcommand{\Q}{\mathbb{Q}}
\DeclareMathOperator{\ord}{ord}
\DeclareMathOperator{\Dec}{Dec}
\DeclareMathOperator{\res}{res}
\DeclareMathOperator{\CH}{CH}

\DeclareMathOperator{\ch}{char}

\newtheorem{thm}{Theorem}[section]
\newtheorem*{thm*}{Theorem}
\newtheorem{prop}[thm]{Proposition}
\newtheorem{cor}[thm]{Corollary}
\newtheorem{lem}[thm]{Lemma}

\theoremstyle{definition}

\theoremstyle{remark}
\newtheorem{rmk}[thm]{Remark}
\newtheorem*{rmk*}{Remark}

\title[Degree 3 Cohomological Invariants of Semisimple Groups]{Degree 3 Cohomological Invariants of Groups That Are Neither Simply Connected nor Adjoint}
\author{Hernando Bermudez} \email{hbermud@emory.edu}\address{Department of Mathematics \& Computer Science, Emory University, Atlanta, GA
30322, USA}
\author{Anthony Ruozzi} \email{anthony@mathcs.emory.edu} \address{Department of Mathematics \& Computer Science, Emory University, Atlanta, GA
30322, USA}
\date{}

\begin{document}
\begin{abstract}
In a recent paper A. Merkurjev constructed an exact sequence which includes as one of the terms the group of degree 3 normalized cohomological invariants of a semisimple algebraic group $G$, greatly extending results of M. Rost for simply connected quasi-simple groups. Furthermore, in the aforementioned paper, Merkurjev uses his exact sequence to determine the groups of invariants for all semisimple adjoint groups of inner type. The goal of this paper is to use Merkurjev's sequence to compute the group of invariants for the remaining split cases, namely groups of types $A$ and $D$ that are neither simply connected nor adjoint. This description not only demonstrates the existence of many previously unstudied invariants but also allows us to extend several known results which relate these invariants to the Rost invariant and algebras with involution.
\end{abstract}
\maketitle
\section{Introduction}

Let $G$ be a linear algebraic group over a field $F$. The group of degree $n$ cohomological invariants of $G$ with values in a $\mathrm{Gal}(F_{\mathrm{sep}}/F)$-module $A$ is the set of natural transformations of functors
\[I:H^1(-,G)\to H^n(-,A).\]
An invariant $I$ is called \emph{normalized} if $I(e)=0$ where $e$ is the trivial $G$-torsor. The object of interest for us is the group $\Invn$ of normalized invariants of degree 3 with values in the group $\Q/\Z(2)$ which is defined as the direct sum of the colimit over $n$ of the Galois modules $\mu_n^{\otimes 2}$ and a $p$-component defined via logarithmic de Rham-Witt differentials in the case $p=\ch(F)>0$ (see \cite[I.5.7]{Ill}). These invariants were determined by M. Rost in the case that $G$ is quasi-simple simply connected, and recently by A. Merkurjev \cite{ssinv2} in the case where $G$ is adjoint of inner type. In fact Merkurjev does quite a bit more, namely, he provides an exact sequence involving the degree $3$ invariants of a semisimple group.

In this paper we use Merkurjev's exact sequence to study the remaining cases of split quasi-simple groups, namely the groups $G=\Hsp_{4n}$\footnote{Recall that $\mathrm{Spin}_{4n}$ has three central subgroups of order $2$. The quotient by one of them is $\mathrm{SO}_{4n}$. In our notation, $\Hsp_{4n}$ is the quotient by either of the other two.} and $G=\SL_n/\mu_m$ (note that the case $G=\SO_{2n}$ has also been computed, cf. \cite[Part 1, Ch.~VI]{GMS}). Our study of the invariants for $\SL_n/\mu_m$ results in many new degree three invariants that have never been discussed in the literature. We describe these, when possible, by restricting the invariant to a suitable subgroup. For $\Hsp_{16}$, much more is known. The description of the invariants for $\Hsp_{4n}$ allows us to recover these results as well as extend them to arbitrary $n$. Of particular interest are a formula for an \textquotedblleft indecomposable\textquotedblright  invariant of $\Hsp_{4n}$ in terms the invariant for $\PSp_{2n}$ and the Rost invariant of a twisted $\Spin$ group and an extension of the Arason invariant $e_3$ to algebras with orthogonal involution.

\section{Decomposable Invariants}
Let $G$ be a semisimple group over a field $F$. Then there is an exact sequence \cite[Thm.~3.9]{ssinv2},
\begin{align}
0\to \CH^2(BG&)_{\mathrm{tors}}\to H^1(F,\hat{C}(1))\xrightarrow{\text{$\sigma$}} \label{thm:mes}\\
&\Invn\to Q(G)/\Dec(G)\xrightarrow{\text{$\theta_G^*$}} H^2(F,\hat{C}(1)). \nonumber
\end{align}

As Merkurjev observed, this exact sequence describes two types of invariants of very different natures. The first are the {\it decomposable invariants} defined as
\[\Invd := \mathrm{Im}(\sigma)\]
where $\sigma$ is the map from the sequence (\ref{thm:mes}). These invariants can be easily understood for our cases of interest by following Merkurjev's arguments for adjoint groups; namely,  we define a map $\alpha_G$ as follows. Let $\tilde{G}$ be the universal cover of $G$ with kernel $C$. For any character $\chi \in \hat{C}(F)$ we consider the pushout
\[\begin{CD}
1 @>>> C @>>> \tilde{G} @>>> G @>>> 1\\
@. @V\chi VV @VVV @| @.\\
1 @>>> \Gm @>>> H_{\chi} @>>> G @>>> 1\\
\end{CD}\]
Define a morphism
\[\alpha_G:\mathrm{H}^1(F,G) \rightarrow \Hom(\hat{C}(F),\mathrm{H}^2(F, \Gm))\]
by sending a torsor $E$ to the map $\chi \mapsto \partial(E)$ where $\partial: \mathrm{H}^1(F,G) \rightarrow \mathrm{H}^2(F, \Gm)$ is the connecting homorphism for the bottom sequence in the diagram. A homomorphism $a \in \Hom(\hat{C}(F),\mathrm{H}^2(F, \Gm))$ will be called {\it admissible} if $\mathrm{ind}(a(\chi)) \mid \mathrm{ord}(\chi)$ for every $\chi\in \hat{C}(F)$.

\begin{prop}
For the two families of groups $G=\SL_n/\mu_m$ and $G = \Hsp_{4n}$,
\[\Invd \simeq \hat{C} \otimes F.\]
\end{prop}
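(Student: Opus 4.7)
The plan is to apply Merkurjev's exact sequence (\ref{thm:mes}). Since $\mathrm{Im}(\sigma) = \Invd$ and the exactness of the sequence gives $\mathrm{Ker}(\sigma) = \CH^2(BG)_{\mathrm{tors}}$, we obtain
\[
\Invd \;\cong\; H^1(F,\hat C(1))\big/\CH^2(BG)_{\mathrm{tors}}.
\]
The proposition therefore reduces to two assertions: (a) $H^1(F,\hat C(1))\cong \hat C\otimes F^\times$, and (b) $\CH^2(BG)_{\mathrm{tors}}=0$ for both families of groups.

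Assertion (a) is a direct application of Kummer theory. Since $G$ is split in both cases, the center $C$ of its universal cover is a split $F$-group of multiplicative type: $C\cong\mu_m$ when $G=\SL_n/\mu_m$, and $C\cong\mu_2$ when $G=\Hsp_{4n}$ (realized as one of the two $\mu_2$-subgroups of $Z(\Spin_{4n})$ distinct from the kernel of $\Spin_{4n}\to\SO_{4n}$). In each case $\hat C$ is a constant finite cyclic Galois module, so $\hat C(1)\cong \mu_{|\hat C|}$ and Kummer theory yields $H^1(F,\hat C(1))\cong F^\times/F^{\times|\hat C|}\cong \hat C\otimes F^\times$.

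Assertion (b), the vanishing of $\CH^2(BG)_{\mathrm{tors}}$, is the main obstacle. I would handle it by following Merkurjev's adjoint-case strategy, using the map $\alpha_G$ to construct an explicit preimage: define $\beta\colon\hat C\otimes F^\times\to\Invd$ by $\chi\otimes u\mapsto \bigl(E\mapsto \alpha_G(E)(\chi)\cup(u)\bigr)$. Admissibility guarantees this pairing lifts to a well-defined element of $H^3(F,\Q/\Z(2))$, and a direct comparison shows that $\beta$ agrees with $\sigma$ under the Kummer isomorphism of (a), hence surjects onto $\Invd$. Injectivity of $\beta$ --- equivalent to the vanishing of $\CH^2(BG)_{\mathrm{tors}}$ --- would be checked on a versal $G$-torsor: for each nontrivial $\chi\in\hat C(F)$ one exhibits a $G$-torsor $E$ over a suitable extension of $F$ such that $\partial_\chi(E)\in\Br(F)$ has index exactly $\mathrm{ord}(\chi)$, whence cupping with a nonzero class $(u)\in F^\times/F^{\times\mathrm{ord}(\chi)}$ produces a nonzero degree-$3$ class. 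For $\SL_n/\mu_m$ such a torsor comes from a generic central simple algebra of degree $n$ equipped with an appropriate $\mu_m$-structure; for $\Hsp_{4n}$ it comes from a generic $4n$-dimensional quadratic form of trivial discriminant with a prescribed Clifford invariant. The careful index computation in each case is the technical crux, and it is here that the specific structure of the two families (cyclic center of small order, well-understood versal objects) is essential.
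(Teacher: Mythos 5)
Your proposal takes a genuinely different route from the paper, and while the overall logic is coherent, it leaves the real work undone.

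Your decomposition into (a) identifying $H^1(F,\hat C(1))$ with $\hat C\otimes F^\times$ and (b) showing $\sigma$ is injective is a reasonable way to parse Merkurjev's exact sequence, and (a) is indeed immediate from Kummer theory since $C$ is cyclic and split in both families. But for (b) the paper does something much cheaper and which your sketch never mentions: it exhibits a simply connected subgroup $G'\subset\tilde G$ of type $A$ that \emph{contains} $C$, so that the whole computation reduces to the type-$A$ case already handled in \cite[Prop.~2.4/2.6, Thm.~4.2]{ssinv2}. That reduction is the entire content of the paper's proof. Your alternative --- constructing, for each nontrivial $\chi$, a versal $G$-torsor whose $\chi$-pushout Brauer class has full index, and then cupping with a generic $u$ --- is plausible in outline but is essentially a from-scratch re-derivation of the results being cited, and you explicitly leave the index computations (``the technical crux'') unfinished. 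Without that reduction or a completed versal-torsor argument, the proof is not done.

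Two smaller points. First, admissibility does not ``guarantee the pairing lifts to a well-defined element of $H^3$'' --- cup products of Brauer classes with units are always defined after choosing a representative in $H^2(F,\mu_m)$; the role of admissibility in Merkurjev's argument is to characterize which homomorphisms $\hat C(F)\to\Br(F)$ are realized by torsors, i.e.\ to bound the image of $\alpha_G$. Second, the claim that ``a direct comparison shows that $\beta$ agrees with $\sigma$'' is itself nontrivial: it is a statement about the construction of $\sigma$ in \cite{ssinv2}, and the paper sidesteps having to check it by routing everything through the $A_n$-subgroup. If you want to pursue your approach, you would need to actually unwind Merkurjev's definition of $\sigma$ and carry out the index estimates for both families; the paper's observation about the $A_n$-subgroup is precisely what makes all of that unnecessary.
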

\begin{proof}
First, we show that every admissible map is in the image of $\alpha_G$. Following Merkurjev's arguments, it suffices to find a simply connected subgroup, $G' \subset \tilde{G}$ of type $\mathrm{A}_n$ that contains $C$. This can be done by inspection or exactly as in \cite[Prop.~2.4/2.6]{ssinv2}. The statement then follows as in the proof of \cite[Thm.~4.2]{ssinv2}.
\end{proof}

\section{Indecomposable Invariants}
The second, and more interesting, class of invariants are the {\it indecomposable invariants} defined as
\[\Invi := \Invn/\Invd.\]
We note that the elements of this group are not cohomological invariants in the sense described in the introduction. However, it is possible to define an invariant in the sense of \cite{GMS} from an indecomposable invariant by considering them as maps $H^1(F,G)\to H^3(F,\Q/\Z(2))/P$, where $P$ is the subgroup generated by cup products of Tits algebras of $G$ and elements of the field as in \cite[p.~11]{ssinv2}. The sequence
\[\begin{CD}
1 @>>> \Invd @>>> \Invn @>>> \Invi @>>> 1
\end{CD}\]
is actually functorial in $G$ as noted in the comments after \cite[Rem. 3.10]{ssinv2}. That is, for another group $G'$ and a map $G' \rightarrow G$, the diagram
\begin{equation}\label{eqn:decinv}\xymatrix{
\Invd\ar[d]\ar[r]&\Invn\ar[d]\ar[r]&\Invi\ar[d]\\
\Invdp\ar[r]&\Invnp\ar[r]&\Invip\\}
\end{equation}
commutes.

To understand the indecomposable invariants, we need to describe the groups $Q(G)$ and $\Dec(G)$. Let $T$ be a maximal torus in $G_{\mathrm{sep}}$, and let $\Lambda$ be the $\Gamma=\mathrm{Gal}(F_{\mathrm{sep}}/F)$-lattice corresponding to $T$, under the usual equivalence of categories between tori and their character modules, equipped with the $*$-action. This action permutes a system of simple roots, cf. \cite[\S 27.A]{KMRT}. It follows that $\Lambda_r \subset \Lambda \subset \Lambda_w$ where $\Lambda_r$ and $\Lambda_w$ are, respectively, the root and weight lattices of $G$. Define the following group
\[Q(G) = (\mathrm{Sym}^2(\Lambda)^W)^{\Gamma}\]
where $W$ is the Weyl group. It can also be described as the group of $\Gamma$-equivariant loops in $G$ \cite[\S 31]{KMRT}. For a  simply connected group, $\Lambda = \Lambda_w$, and $Q(G)$ is generated by a single element denoted $q$ \cite[Cor.~31.27]{KMRT}. Thus, for any other $\Lambda$, since $(\mathrm{Sym}^2(\Lambda)^W) = \mathrm{Sym}^2(\Lambda) \cap (\mathrm{Sym}^2(\Lambda_w)^W)$, there is a unique positive integer $\ell$ such that $Q(G) = \ell q\mathbb{Z}$. Furthermore, $\ell$ is the smallest integer such that the quadratic form $\ell q$ takes only integer values on the lattice $\Lambda$.

Let $n_G$ be the gcd of all Dynkin indices of all representations of $G$. The values of this number for absolutely simple simply connected groups can be found in \cite[Appendix B]{GMS}. Since $\ell \mid n_G$, 
\[\Dec(G) = n_Gq\mathbb{Z}\]
defines a subgroup of $Q(G)$, cf. \cite[Ex.~3.5]{ssinv2} . 

\begin{lem}
For $G=\SL_n/\mu_m$ and $G = \Hsp_{4n}$,
\[\Invi \simeq Q(G)/\Dec{G}.\]
\end{lem}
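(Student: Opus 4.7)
The exact sequence (\ref{thm:mes}) provides the main tool. By exactness at $\Invn$ and $Q(G)/\Dec(G)$, the map $\Invn \to Q(G)/\Dec(G)$ has kernel $\mathrm{Im}(\sigma) = \Invd$ and image $\ker(\theta_G^*)$, yielding a natural injection
\[
\Invi = \Invn/\Invd \hookrightarrow Q(G)/\Dec(G)
\]
whose image is exactly $\ker(\theta_G^*)$. The lemma is therefore equivalent to the assertion that $\theta_G^* = 0$ for our two families.

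My plan is to prove $\theta_G^* = 0$ by exploiting the functoriality diagram (\ref{eqn:decinv}) applied to the canonical projection $\pi\colon G \twoheadrightarrow G^{\mathrm{ad}}$ onto the adjoint quotient. Writing $C'$ for the center of $\tilde{G}$ (i.e.\ the kernel of $\tilde{G} \to G^{\mathrm{ad}}$), so that $C \subseteq C'$ and $\hat{C}' \to \hat{C}$ by restriction, functoriality of (\ref{thm:mes}) produces a commutative square
\[
\xymatrix{
Q(G^{\mathrm{ad}})/\Dec(G^{\mathrm{ad}}) \ar[d] \ar[r]^-{\theta_{G^{\mathrm{ad}}}^*} & H^2(F,\hat{C}'(1)) \ar[d]\\
Q(G)/\Dec(G) \ar[r]^-{\theta_G^*} & H^2(F,\hat{C}(1))
}
\]
By Merkurjev's computation for adjoint groups of inner type \cite{ssinv2}, the top horizontal map vanishes. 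Consequently, provided the left vertical arrow is surjective, commutativity forces $\theta_G^* = 0$, as every class in $Q(G)/\Dec(G)$ is then hit by a class in the kernel of $\theta_{G^{\mathrm{ad}}}^*$.

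The principal obstacle is therefore the surjectivity of $Q(G^{\mathrm{ad}})/\Dec(G^{\mathrm{ad}}) \to Q(G)/\Dec(G)$. Both sides are cyclic: writing $Q(G) = \ell q\Z$ and $Q(G^{\mathrm{ad}}) = \ell_{\mathrm{ad}} q\Z$ with $\ell \mid \ell_{\mathrm{ad}}$, surjectivity amounts to the arithmetic condition $\gcd(\ell_{\mathrm{ad}}/\ell,\, n_G/\ell) = 1$. Verifying this reduces to explicit determination of $\ell$, $\ell_{\mathrm{ad}}$, and $n_G$ for each of our families. This in turn requires identifying which irreducible representations of $\SL_n$ (respectively $\Spin_{4n}$) descend through the central quotient by $\mu_m$ (respectively the appropriate order-$2$ subgroup), and computing the gcd of their Dynkin indices; for $\Hsp_{4n}$ the half-spin representation plays a decisive role. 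Once these numerics are tabulated, the arithmetic condition can be checked directly in each case, completing the argument.
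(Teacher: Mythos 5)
The paper's own proof is a one-liner: Merkurjev himself observes (in the remarks following his Theorem 3.9) that $\theta_G^*$ is trivial whenever $G$ is split, and the lemma follows immediately from the exact sequence. Your proposal correctly reduces the lemma to the vanishing of $\theta_G^*$, but the route you take to establish that vanishing has a genuine gap.

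The obstruction is exactly where you flag it: the surjectivity of $Q(G^{\mathrm{ad}})/\Dec(G^{\mathrm{ad}}) \to Q(G)/\Dec(G)$, which you assert ``can be checked directly in each case,'' in fact \emph{fails} in most of the cases of interest. Take $G = \SL_8/\mu_2$, so $n=8$, $m=2$, $p=2$, $r=1$, $s=3$. From formula (\ref{eq:QSL}) one gets $\ell = 1$, and from (\ref{eq:nSL}) one gets $n_G = 2$, so $Q(G)/\Dec(G) \simeq \Z/2\Z$. For $G^{\mathrm{ad}} = \mathrm{PGL}_8$, formula (\ref{eq:QSL}) with $m=n=8$ gives $\ell_{\mathrm{ad}} = 16$. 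The inclusion $Q(G^{\mathrm{ad}}) = 16q\Z \hookrightarrow Q(G) = q\Z$ sends the generator to $16 \equiv 0 \pmod 2$, so the induced map $Q(G^{\mathrm{ad}})/\Dec(G^{\mathrm{ad}}) \to Q(G)/\Dec(G)$ is the \emph{zero} map, not a surjection. The same phenomenon occurs for $\Hsp_{16}$: there $Q(G)/\Dec(G) \simeq \Z/4\Z$ while $\ell_{\mathrm{ad}} = 2$ for $\PGO_{16}$, so the image of $Q(G^{\mathrm{ad}})/\Dec(G^{\mathrm{ad}})$ in $\Z/4\Z$ is only the subgroup generated by $2$. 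In both cases commutativity of the square gives you no control over $\theta_G^*$ outside the image of the left vertical map, so the argument collapses precisely on the interesting part of $Q(G)/\Dec(G)$. (A map $\tilde{G} \to G$ in the other direction fares no better, since then $\hat{C}_{\tilde{G}} = 0$ kills the relevant target.) The intended fix is to bypass the adjoint group entirely and quote Merkurjev's direct observation that $\theta_G^*$ vanishes for all \emph{split} semisimple $G$, which is what the paper does.
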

\begin{proof}
This follows immediately from Merkurjev's exact sequence (\ref{thm:mes}) and the remarks following \cite[Thm.~3.9]{ssinv2}, since, in the case of split groups, the map $\theta^*_G$ is trivial.
\end{proof}

Therefore, in order to calculate the indecomposable invariants, it suffices to compute this quotient. In the following  sections, we compute the groups $Q(G)$ and $\Dec(G)$ for the split groups $G=\SL_n/\mu_{p^r}$  and $G = \Hsp_{4n}$. Throughout, $\ell q$ where $\ell\in \Z^+$ will denote the generator of $Q(G)$.

\section{$\SL_n/\mu_m$}

Let $p^s$ be the largest power of $p$ dividing $n$, $r$ a positive integer with $r\leq s$. 

\begin{thm}\label{thm:InvSL}
\[\mathrm{Inv}^3(\SL_n/\mu_{p^r},\mathbb{Q}/\mathbb{Z}(2))_{\mathrm{dec}}\cong F^{\times}/F^{\times p^r}.\]

Moreover, for $p$ odd,

\[\mathrm{Inv}^3(\SL_n/\mu_{p^r},\mathbb{Q}/\mathbb{Z}(2))_{\mathrm{ind}}\cong \begin{cases} (\Z/p^{r}\Z)q&\quad \text{if $s\geq  2r$}\\
(p^{2r-s}\Z/p^{r}\Z)q& \quad\text{if $r < s < 2r$}\\
0 & \quad\text{if $s=r$}\\
\end{cases}\]

and for $p=2$,

\[\mathrm{Inv}^3(\SL_n/\mu_{2^r},\mathbb{Q}/\mathbb{Z}(2))_{\mathrm{ind}}\cong \begin{cases} (\Z/2^{r}\Z)q&\quad \text{if $s\geq  2r+1$}\\
(2^{2r+1-s}\Z/2^{r}\Z)q& \quad\text{if $r +1 < s < 2r+1$}\\
0& \quad\text{if $s =r, r+1$}\\
\end{cases}\]

\end{thm}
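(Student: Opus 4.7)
The plan is to split $\Invn$ via the short exact sequence from Section~3 and compute the two pieces independently. The decomposable part follows at once from the Proposition: for split $G$ with kernel $C=\mu_{p^r}$, the character group $\hat{C}(F)$ is $\Z/p^r\Z$, so $\Invd\cong\hat{C}(F)\otimes F^\times\cong F^\times/F^{\times p^r}$.

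For the indecomposable part, the Lemma reduces the computation to $Q(G)/\Dec(G)=\ell q\Z/n_G q\Z$; I will compute $\ell$ and $n_G$ separately. First identify the character lattice: $\Lambda=\Lambda_r+p^r\Lambda_w$, generated over $\Lambda_r$ by $p^r\omega_1$, since $\omega_1$ generates $\Lambda_w/\Lambda_r\cong\Z/n\Z$. Applying the integrality criterion stated just before the Lemma, the condition $\ell q\in\mathrm{Sym}^2(\Lambda)$ reduces (after $W$-invariance collapses the cross terms lying in $\Lambda_r$) to integrality of the form at the generator $(n/p^r)\omega_1$ of the cocharacter lattice, which by type-$A$ self-duality is $\Lambda_r+(n/p^r)\omega_1\Z$. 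Using the standard generator with $q(\omega_i)=i(n-i)/(2n)$, one computes
\[q\bigl((n/p^r)\omega_1\bigr)=\frac{n(n-1)}{2p^{2r}},\]
and extracting the $p$-adic deficit with $n=p^s u$, $\gcd(u,p)=1$, gives $\ell=p^{\max(0,2r-s)}$ for $p$ odd and $\ell=2^{\max(0,2r+1-s)}$ for $p=2$; the extra $+1$ for $p=2$ reflects the factor $2$ in the denominator, since for $p$ odd one of $u$ or $p^s u-1$ is always even while for $p=2$ both are odd.

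The Dynkin index $n_G$ is computed by studying representations of $\SL_n$ descending to $G$, namely those with highest weight in $\Lambda$, equivalently those with central character trivial on $\mu_{p^r}$. The fundamental representation with highest weight $\omega_k$ descends iff $p^r\mid k$ and has Dynkin index $\binom{n-2}{k-1}$; choosing $k=p^r$ whenever $p^r<n$ furnishes an upper bound. Combined with tensor products and Kummer-theoretic divisibility estimates for $\binom{n-2}{km-1}$ ($k\geq 1$), this establishes $n_G=p^r$ in all regimes except $p=2$ and $s=r$, where every descending representation acquires an additional factor of $2$, so $n_G=2^{r+1}$. Combining yields $\Invi\cong\Z/(n_G/\ell)\Z$ generated by the class of $\ell q$, which after case analysis on the three (respectively four) regimes reproduces the stated formulas.

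The principal obstacle is the computation of $n_G$ in the exceptional case $p=2$, $s=r$: there $\mu_{2^r}$ coincides with the $2$-primary part of the center of $\SL_n$, and one must show that every representation of $G$ has even Dynkin index relative to the predicted value $2^r$. This will require a delicate $2$-adic analysis of the Dynkin indices $\binom{n-2}{km-1}$ via Kummer's theorem and a check that the bound is tight.
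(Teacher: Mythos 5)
Your approach mirrors the paper's: split off the decomposable part via the Proposition in Section~2, reduce the indecomposable part to $Q(G)/\Dec(G) = \ell q\Z/n_G q\Z$ using the Lemma, and compute $\ell$ and $n_G$ separately. The $\ell$ computation is correct (and matches the paper's $q(\tau)=n(n-1)/(2m^2)$ via the Weyl-conjugate generator $(n/p^r)\omega_1$), though the sentence ``identify the character lattice $\Lambda=\Lambda_r+p^r\Lambda_w$'' is the wrong lattice for the integrality criterion --- the relevant lattice in $Q(G)=(\mathrm{Sym}^2\Lambda)^W$ is the one generated by coroots and $(n/p^r)\omega_1$, which you do eventually pivot to, but the statement as written is internally inconsistent.

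The real gap is at the end. You flag the case $p=2$, $s=r$ as the ``principal obstacle,'' requiring a ``delicate $2$-adic analysis of the Dynkin indices $\binom{n-2}{km-1}$,'' and leave it unresolved. This is not just incomplete --- the proposed route would also be awkward, since when $s=r$ and $u=1$ (i.e., $n=p^r$) there are no fundamental weights $\omega_k$ with $p^r\mid k$ and $1\le k\le n-1$ at all. The paper sidesteps all of this: because $\Dec(G)\subseteq Q(G)$, one always has $\ell\mid n_G$, and the $Q(G)$ computation you already did gives $\ell=2^{r+1}$ precisely when $p=2$, $s=r$. Combined with the standing upper bound $n_G\mid\gcd(m^2,2n)$ (which equals $2^{r+1}$ in this regime) and the fact that $n_G$ is a power of $2$, one concludes $n_G=2^{r+1}$ with no binomial analysis at all. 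Similarly for $p$ odd, $s=r$, the bounds $p^r\mid n_G\mid\gcd(p^{2r},2n)$ immediately force $n_G=p^r$. The binomial coefficient $\binom{n-2}{p^r-1}$ and Kummer's theorem are only needed in the non-exceptional case $s\neq r$, where the paper computes $\ord_p\binom{n-2}{p^r-1}=r$ explicitly; your phrase ``combined with tensor products and Kummer-theoretic divisibility estimates'' gestures at this but should be replaced by that single concrete computation.
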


\begin{proof}
This will follow from formulas (\ref{eq:QSL}) and (\ref{eq:nSL}) below.
\end{proof}

\subsection{$Q(G)$ for $\SL_n/\mu_m$}

For $\SL_n/\mu_m$, $\Lambda$ is generated by the coroots along with the element $\tau:=\frac{1}{m}(\alpha_1+2\alpha_2+\cdots+(n-1)\alpha_{n-1})$, see \cite{G:vanish}. In this case, since all the coroots have the same length, the quadratic form is just given by taking the Gram matrix to be the Cartan matrix and we get

\[q=\sum_{i=1}^n w_i^2-\sum_{i=1}^{n-1}w_iw_{i+1}.\]

We then have

\begin{align*}
q(\tau):&=\frac{1}{m^2}\left(\sum_{i=1}^{n-1}i^2-\sum_{i=1}^{n-2}i(i+1)\right)\\
&=\frac{1}{m^2}\left((n-1)^2+\frac{(n-2)(n-1)}{2}\right)\\
&=\frac{n(n-1)}{2m^2}.
\end{align*}
By definition, $m\mid n$, so the fact that $\gcd(n-1,n)=1$ implies that we have 
\begin{equation}\label{eq:QSL}
\ell=\begin{cases}
2m^2/\gcd(2m^2,n)& \quad \text{if $n$ is even.}\\
m^2/\gcd(m^2,n)& \quad \text{if $n$ is odd.}
\end{cases}
\end{equation}

Note that if $m=n$ then this agrees with Merkurjev's result for adjoint groups, and also says, of course, that if $m=1$, i.e. $G$ is simply connected, then $\ell=1$.

\subsection{$\Dec(G)$ for $\SL_n/\mu_{p^r}$}

For a group of type $A_{n-1}$, we take the set of simple roots $\lbrace \overline{e}_{1}-\overline{e}_{2},...,\overline{e}_{n-1}-\overline{e}_{n}\rbrace$, where the $\overline{e}_i$ are the images of the standard basis vectors $e_i$ for $\mathbb{R}^n$. A dominant character $\chi \in \Lambda$ corresponds to a sum $\sum c_i \overline{e}_i$ with $c_1 \geq c_2 \geq ... \geq c_n$. Suppose the $c_i$ have distinct values $a_1 > ... >a_k$ with multiplicities $r_1,...,r_k$. In this case, by \cite[pg. 136]{GMS}, $n_G$ can be computed by taking the gcd over all integers
\[N(\chi) = \frac{(n-2)!}{r_1!r_2!...r_k!}\lbrack n \sum_i r_ia_i^2 - (\sum_ir_ia_i)^2 \rbrack.\]

We already know some convenient bounds on $n_G$. First, by \cite[Part 2, Lem.~11.4]{GMS}, $m \mid n_G$. Moreover, $n_G \mid \gcd(m^2,2n)$. To see that it divides the first, choose the dominant character with $c_1=m$, $c_i=0$ for $i>0$. It divides the second by \cite[Ex.~1.3]{G:vanish} and the observation that the Coxeter number in this case is $n$. 

We consider only the case where $m=p^r$ is a power of a prime and $n$ is arbitrary. In this case, $n_G$ must be a power of $p$ because it divides $m^2$. Let $n=k\cdot p^s$ where $k$ is coprime to $p$. If $s=r$, then we can conclude by the above bounds. Namely, if $p\neq 2$ then we have $n_G=p^r$ since we know it is a power of $p$, and it has to be equal to $p^r$ because it divides $2n$. If $p=2$, then as $Q(\SL_{n}/\mu_{2^r})$ has generator $2^{r+1}q$ in this case, we have that $2^{r+1} \mid n_G$. Further, $n_G$ must be $2^{r+1}$ because it is a power of $2$ dividing $2n$. 

Thus we reduce to assuming that $s \not= r$. Consider the $m$-th exterior power of the tautological representation of $\SL_n$. From \cite{Bou:alg9}, it has highest weight $\lambda=e_1+\cdots+e_m$, and in the above notation, $a_1=1, a_2=0$ and $r_1=m, r_2=n-m$, so that
\begin{align*}
N(\lambda)&=\frac{(n-2)!}{m!(n-m)!}(n\cdot m-m^2)\\
&=\binom{n-2}{m-1}\\
&=\binom{n}{m}\frac{m(n-m)}{n(n-1)}
\end{align*}
as can be found in Dynkin's Tables \cite[Table 5]{Dynk:ssub}. Appealing to Kummer's Theorem on the power of a prime dividing the binomial coefficients, we have
\[\ord_p\binom{n}{p^r} = s-r.\]

Using this in the above formula, if $s \neq r$,
\begin{align*}
\ord_p\binom{n}{p^r}\frac{p^r(kp^s-p^r)}{kp^s(kp^s-1)}&=\ord_p\binom{n}{p^r}+2r-s\\
&=s-r+2r-s\\
&=r
\end{align*}
This computation implies that $n_G=p^r$, since we already knew that $m=p^r \mid n_G$. 

In summary,
\begin{equation}\label{eq:nSL}
n_{\SL_n/\mu_{p^r}} = \begin{cases} p^r & \text{if $p \not = 2$ or $p=2$ and $s\not=r$}.\\
2^{r+1}& \text{if $p=2$ and $s=r$}.
\end{cases}
\end{equation}

\subsection{A Fibration}
It would be nice to have an explicit description of the indecomposable invariants in this case. However, even the torsors for these groups are difficult to describe. Lacking a reference, we include here a fibration which in some small way explains these objects.
\begin{prop}
There is a surjection of pointed sets
\[H^1(F,\SL_n/\mu_{m}) \twoheadrightarrow \lbrace \text{central simple algebras/$F$ of degree $n$ with exponent $\mid m$} \rbrace/\sim\]
where $A \sim B$ if $A$ and $B$ are isomorphic as $F$-algebras. Moreover, the fiber over the algebra $A$ is isomorphic to $F^{\times}/\mathrm{Nrd}(A^{\times})F^{\times n/p^r}$.
\end{prop}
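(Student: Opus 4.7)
The approach is to analyze the map
\[\pi:H^1(F,\SL_n/\mu_m)\to H^1(F,\mathrm{PGL}_n)\]
induced by the projection $\SL_n/\mu_m\twoheadrightarrow\SL_n/\mu_n=\mathrm{PGL}_n$, using the central extension
\[1\to\mu_{n/m}\to\SL_n/\mu_m\to\mathrm{PGL}_n\to 1\]
coming from $\mu_n/\mu_m\cong\mu_{n/m}$ (note $m\mid n$). Since $H^1(F,\mathrm{PGL}_n)$ classifies central simple algebras of degree $n$ up to $F$-isomorphism, $\pi$ assigns to each torsor an isomorphism class of algebras.

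For surjectivity, I would identify the image of $\pi$ with the kernel of the connecting map $H^1(F,\mathrm{PGL}_n)\to H^2(F,\mu_{n/m})$. Comparing with the familiar connecting map $H^1(F,\mathrm{PGL}_n)\to H^2(F,\mu_n)={}_n\Br(F)$, $[A]\mapsto[A]$, via the quotient $\mu_n\to\mu_{n/m}$ given by $z\mapsto z^m$, one sees that the map in question is $[A]\mapsto m[A]$. This vanishes precisely when $\exp(A)\mid m$, yielding the desired surjection onto the indicated set.

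The twisting trick (cf.\ Serre, \emph{Cohomologie Galoisienne} I.5.5) then puts the fiber over $[A]$ in bijection with the preimage of the basepoint under $H^1(F,\SL_1(A)/\mu_m)\to H^1(F,\mathrm{PGL}_1(A))$ in the twisted extension $1\to\mu_{n/m}\to\SL_1(A)/\mu_m\to\mathrm{PGL}_1(A)\to 1$. By exactness this preimage equals the image of $H^1(F,\mu_{n/m})\to H^1(F,\SL_1(A)/\mu_m)$, which in turn is $H^1(F,\mu_{n/m})/\mathrm{Im}(\partial_A)$, where $\partial_A:A^\times/F^\times\to F^\times/F^{\times n/m}$ is the connecting map of the twisted sequence.

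The heart of the argument, and the only real obstacle, is the explicit computation of $\partial_A$. For $a\in A^\times$, choose $\lambda\in F_{\mathrm{sep}}^\times$ with $\lambda^n=\mathrm{Nrd}(a)^{-1}$; then $\lambda a\in\SL_1(A)(F_{\mathrm{sep}})$ and its class in $\SL_1(A)/\mu_m$ lifts $\bar a$. Because $a$ is $F$-rational, the resulting cocycle is $\sigma\mapsto\sigma(\lambda)/\lambda\in\mu_n$; pushing through $z\mapsto z^m$ and writing $\mu=\lambda^m$ gives the cocycle $\sigma\mapsto\sigma(\mu)/\mu$ with $\mu^{n/m}=\mathrm{Nrd}(a)^{-1}$, which under the Kummer isomorphism $H^1(F,\mu_{n/m})=F^\times/F^{\times n/m}$ is the class of $\mathrm{Nrd}(a)^{-1}$. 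Hence $\mathrm{Im}(\partial_A)=\mathrm{Nrd}(A^\times)F^{\times n/m}/F^{\times n/m}$, and the fiber is $F^\times/\mathrm{Nrd}(A^\times)F^{\times n/m}$, as claimed.
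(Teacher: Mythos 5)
Your proof is correct. For the surjection you take exactly the paper's route: identify $\mathrm{Im}(\phi)$ with the kernel of the connecting map $H^1(F,\mathrm{PGL}_n)\to H^2(F,\mu_{n/m})$, which under the comparison to $H^2(F,\mu_n)$ is the condition $\exp(A)\mid m$. For the fiber, the paper works with the \emph{other} central extension $1\to\mu_m\to\SL_1(A)\to\SL_1(A)/\mu_m\to 1$ after twisting: it computes $H^1(F,\SL_1(A))=F^\times/\mathrm{Nrd}(A^\times)$ from the $\mathrm{GL}_1(A)$ sequence and then quotients by the image of $H^1(F,\mu_m)$, i.e.\ by the classes $a^{n/m}$. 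You instead twist $1\to\mu_{n/m}\to\SL_n/\mu_m\to\mathrm{PGL}_n\to 1$ and compute the connecting map $\partial_A:\mathrm{PGL}_1(A)(F)\to H^1(F,\mu_{n/m})$ via the explicit reduced-norm cocycle. The two computations are dual faces of the same ladder of central $\mu$-subgroups of $\SL_1(A)$ and land on the same answer; yours has the minor advantage of pinning down $\partial_A$ explicitly rather than appealing to the $H^1(F,\mu_m)$-action. One small point: the proposition as printed gives the fiber as $F^\times/\mathrm{Nrd}(A^\times)F^{\times n/p^r}$, which is a typo (the exponent $p^r$ leaked in from the preceding subsection where $m=p^r$); you have correctly established the intended $F^\times/\mathrm{Nrd}(A^\times)F^{\times n/m}$.
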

\begin{proof}
Consider the commutative diagram:
\[\begin{CD}
1 @>>> \mu_{m} @>>> \SL_n @>>> \SL_n/\mu_{m} @>>> 1\\
@. @VVV @VVV @VVV @.\\
1 @>>> \mathbb{G}_m @>>> \mathrm{GL}_n @>>> \mathrm{PGL}_n @>>> 1\\
\end{CD}\]
Passing to cohomology, we have that
\[\begin{CD}
H^1(F,\SL_n/\mu_{m}) @>>> H^2(F,\mu_{m})\\
@V\phi VV @VVV\\
H^1(F,\mathrm{PGL}_n) @>>> H^2(F,\mathbb{G}_m)
\end{CD}\]
commutes. Since
\[\mathrm{Im}(\phi) = \mathrm{ker}\lbrace H^1(F,\mathrm{PGL}_n) \longrightarrow H^2(F,\mu_{n/m}) \rbrace,\]
it follows that $H^1(F,\SL_n/\mu_{m})$ maps onto the isomorphism classes of central simple algebras of degree $n$ with exponent dividing $m$. The fiber of the top map of the diagram over an algebra $A$ is in the image of $H^1(F,\mathrm{SL}_1(A))$, and this set can be easily computed from the sequence
\[\begin{CD}
1 @>>> \mathrm{SL}_1(A)(F) @>>> \mathrm{GL}_1(A)(F) @>>> \mathbb{G}_m(F) @>>> H^1(F,\mathrm{SL}_1(A)) @>>> 1.
\end{CD}\]
Finally, the map $H^1(F,\mu_{m}) \rightarrow H^1(F,\mathrm{SL}_1(A))$ sends a field element $a \in F^{\times}$ to $a^{n/m} \in F^{\times}/\mathrm{Nrd}(A^{\times})$. This yields the fibration described in the proposition.
\end{proof}

\subsection{Examples}
If $p=2$, then any central simple algebra with exponent dividing $2$ has an involution of the first kind. This structure has been used to define degree 3 cohomological invariants in some small cases \cite{GPT}, but for odd primes and higher powers, these invariants do not exist in the literature. However, for some values we have a complete description.

\begin{thm}\label{thm:SL2mu2}
If $4 \mid n$,
\[\mathrm{Inv}^3(\SL_{2n}/\mu_2,\mathbb{Q}/\mathbb{Z}(2))_{\mathrm{norm}} \simeq F^{\times}/F^{\times 2} \oplus \Z/2\Z.\]
Moreover, these invariants can be described explicitly by restricting the degree $3$ invariants of $\mathrm{PSp}_{2n}$.
\end{thm}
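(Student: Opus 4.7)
The plan is to exploit the closed embedding $\PSp_{2n} \hookrightarrow \SL_{2n}/\mu_2$ induced by $\Sp_{2n} \subset \SL_{2n}$: the center $\{\pm I\} = \mu_2$ of $\Sp_{2n}$ is precisely the unique subgroup of order two of the center $\mu_{2n}$ of $\SL_{2n}$, so passing to quotients gives the embedding. By the functoriality of Merkurjev's sequence (cf. diagram~(\ref{eqn:decinv})), this embedding yields a commutative diagram with exact rows
\[\xymatrix{
0 \ar[r] & \Invd \ar[d] \ar[r] & \Invn \ar[d] \ar[r] & \Invi \ar[d] \ar[r] & 0 \\
0 \ar[r] & \Invdp \ar[r] & \Invnp \ar[r] & \Invip \ar[r] & 0
}\]
with $G = \SL_{2n}/\mu_2$ and $G' = \PSp_{2n}$, the vertical arrows being restriction.

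First, Theorem~\ref{thm:InvSL} (applied with $p = 2$, $r = 1$, and $s \geq 3$, the latter because $4 \mid n$ forces $8 \mid 2n$) gives $\Invd \cong F^\times/F^{\times 2}$ and $\Invi \cong \Z/2\Z$. Next, I would invoke Merkurjev's computation for adjoint groups of inner type \cite{ssinv2} to get $\Invnp \cong F^\times/F^{\times 2} \oplus \Z/2\Z$, with the $\Z/2\Z$-summand generated by an explicit indecomposable invariant; in particular the bottom row already splits.

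It remains to verify that both outer vertical arrows are isomorphisms. For $\Invd$, both sides are identified with $\hat{C}(F) \otimes F^\times$ for the common central $\mu_2$ via the construction of Section~2, and since this identification uses only the central subgroup, the restriction map is the identity. For $\Invi$, both quotients are $\Z/2\Z$, and I must show that the induced map $Q(G)/\Dec(G) \to Q(G')/\Dec(G')$ is nontrivial. This reduces to a direct calculation on character lattices: the torus inclusion gives a surjection $\Lambda(G) \twoheadrightarrow \Lambda(G')$, compatible with the hyperoctahedral embedding $W_{C_n} = (\Z/2)^n \rtimes S_n \hookrightarrow S_{2n} = W_{A_{2n-1}}$, and one tracks the generator $\ell q$ of $Q(G)$ (where $\ell = 1$ by formula~(\ref{eq:QSL})) through this map, comparing against $\Dec(G') = n_{\PSp_{2n}} q' \Z$.

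Once both outer vertical arrows are isomorphisms, the five-lemma forces $\Invn \xrightarrow{\sim} \Invnp$, simultaneously yielding the stated decomposition $\Invn \cong F^\times/F^{\times 2} \oplus \Z/2\Z$ and the ``moreover'' clause, since the isomorphism is by construction restriction along $\PSp_{2n} \hookrightarrow \SL_{2n}/\mu_2$. The main obstacle is the lattice computation for $\Invi$: careful bookkeeping with the $A_{2n-1}$- and $C_n$-type root systems under the torus inclusion, and identification of the relevant Dynkin indices. Everything else is formal.
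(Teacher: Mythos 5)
The paper and your proposal both start from the embedding $\PSp_{2n}\hookrightarrow\SL_{2n}/\mu_2$ and from Theorem~\ref{thm:InvSL} to identify $\Invd$ and $\Invi$, but the key tool is different and your version has a gap. The paper does \emph{not} attack the indecomposable restriction map by a lattice computation. Instead, it observes that $\PSp_{2n}$ is the stabilizer of a generic point in the open $\SL_{2n}/\mu_2$-orbit on $\mathbb{P}(\wedge^2 F^{\oplus 2n})$, and invokes \cite[\S 9.3]{G:Lens} to get a \emph{surjection} of pointed sets $H^1(F,\PSp_{2n})\twoheadrightarrow H^1(F,\SL_{2n}/\mu_2)$. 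This automatically makes the restriction map on normalized invariants \emph{injective}, and then the identification $\Invd\simeq\Invdp\simeq F^\times/F^{\times 2}$ plus the equality of orders of the two $\Z/2\Z$ indecomposable quotients forces restriction to be an isomorphism --- no computation with $Q$, $\Dec$, or Weyl groups is ever needed for this step. The splitting of the $\PSp_{2n}$ sequence (from \cite{GPT}) then transports to $\SL_{2n}/\mu_2$.

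Your route replaces this with a five-lemma argument in which you must show directly that $Q(\SL_{2n}/\mu_2)/\Dec(\SL_{2n}/\mu_2)\to Q(\PSp_{2n})/\Dec(\PSp_{2n})$ is nonzero. You correctly reduce the problem to this statement, but you then explicitly defer the lattice computation (``the main obstacle is the lattice computation for $\Invi$''). That computation is precisely what carries the content of the step, so as written this is a genuine gap, not a routine omission: without either it or the surjectivity input from \cite{G:Lens}, you have no control on the indecomposable restriction and the five lemma does not close. If you want to pursue your route, note that the paper essentially performs the needed tracking in Section~6.1 (via the Rost-multiplier-$1$ inclusion $\SL_{2n}\hookrightarrow\Spin_{4n}$ and the chain $\Sp_{2n}\subset\SL_{2n}$), and that you would also need to pin down $\Dec(\PSp_{2n})$ from \cite[Thm.~4.6]{ssinv2} rather than leave it symbolic. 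Alternatively, the prehomogeneous-space surjection avoids the computation entirely and is the cleaner path.

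One smaller point: your treatment of the decomposable arrow (``the restriction map is the identity'') is correct but glosses over the needed compatibility. The actual argument, as in the paper, is that both connecting maps to $H^2(F,\mu_2)$ fit in a commutative square (since $\Sp_{2n}\subset\SL_{2n}$ identifies the two copies of $\mu_2$), and the decomposable invariant attached to $a\in F^\times$ on either side is $E\mapsto (a)\cup\partial(E)$. This makes ``identity'' literal; your phrasing ``via the construction of Section~2'' should be expanded to this diagram to be airtight.
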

\begin{proof}
Using the natural inclusion
\[\mathrm{Sp}_{2n} \subset \SL_{2n}\]
and modding out by the center of $\mathrm{Sp}_{2n}$ gives an inclusion
\[\mathrm{PSp}_{2n} \subset \SL_{2n}/\mu_2.\]

The degree $3$ invariants of $\mathrm{PSp}_{2n}$ were completely described in \cite{GPT} when $4 \mid n$. We use this description and the above inclusion to do the same for $\SL_{2n}/\mu_2$. 
The group $\mathrm{SL}_{2n}$ acts on the $2n$-dimensional vector space $F^{\oplus 2n}$. Let $V = \wedge^2F^{\oplus 2n}$. $\mathrm{SL}_{2n}$ acts canonically on $V$, and since it is the second exterior power, so does $\SL_{2n}/\mu_{2}$. Over an algebraic closure, there is an open $\SL_{2n}/\mu_{2}$-orbit in $\mathbb{P}(V)$ \cite[Summary Table]{PoV}, so by\cite[\S 9.3]{G:Lens}, there is a surjection
\[\begin{CD}
H^1(F,N) @>>> H^1(F, \SL_{2n}/\mu_2)
\end{CD}\]
where $N$ is the stabilizer of a generic point in the open orbit. By inspection, or again referring to \cite[Summary Table]{PoV}, $N = \mathrm{Sp}_{2n}/\mu_2 = \mathrm{PSp}_{2n}$. Thus we have an inclusion
\[\mathrm{Inv}^3(\SL_{2n}/\mu_2,\mathbb{Q}/\mathbb{Z}(2))_{\mathrm{norm}} \hookrightarrow \mathrm{Inv}^3(\PSp_{2n},\mathbb{Q}/\mathbb{Z}(2))_{\mathrm{norm}}.\]
Now, from the commutative diagram
\[\begin{CD}
H^1(F, \PSp_{2n}) @>>> H^2(F, \mu_2) \\
@VVV @|\\
H^1(F, \SL_{2n}/\mu_2) @>>> H^2(F, \mu_2),\\
\end{CD}\]
and the description of the decomposable invariants in \cite[Thm.~4.6]{ssinv2}, it follows that 
\[\mathrm{Inv}^3(\SL_{2n}/\mu_2,\mathbb{Q}/\mathbb{Z}(2))_{\mathrm{dec}} \simeq \mathrm{Inv}^3(\PSp_{2n},\mathbb{Q}/\mathbb{Z}(2))_{\mathrm{dec}}.\]
By the commutativity of diagram (\ref{eqn:decinv}), it follows that \[\mathrm{Inv}^3(\SL_{2n}/\mu_2,\mathbb{Q}/\mathbb{Z}(2))_{\mathrm{dec}} \hookrightarrow \mathrm{Inv}^3(\SL_{2n}/\mu_2,\mathbb{Q}/\mathbb{Z}(2))_{\mathrm{norm}}\] also splits, and these groups have the same degree $3$ normalized invariants. The statement follows from the description of the invariants in Theorem \ref{thm:InvSL} and the explicit construction of \cite{GPT}.
\end{proof}

\subsection*{An Explicit Invariant for $\mathrm{SL}_{8}/\mu_2$}

In \cite[\S 5]{GPT}, the authors go further to describe the invariants for $\mathrm{PSp}_8$ via the Rost invariant for a simply connected group of type $E_6$. A similar calculation can be done for $\SL_8/\mu_2$ and a simply connected group of type $E_7$. First, consider the diagram
\[\begin{array}[c]{ccc}
\mathrm{PSp}_8 &\subset& \mathrm{E}_6\\
\cap & &\cap\\
\mathrm{SL}_8/\mu_2 &\subset& \mathrm{E}_7
\end{array}\]
The inclusion $\mathrm{SL}_8/\mu_2 \hookrightarrow \mathrm{E}_7$ can be obtained via the Borel-de Siebenthal theory of maximal rank subgroups by deleting the vertex labeled $2$ which does not disconnect the Dynkin diagram, cf. \cite{Lehalleur}. We have the corresponding map of simply connected groups
\[\mathrm{SL}_8 \rightarrow \mathrm{SL}_8/\mu_2 \hookrightarrow \mathrm{E}_7.\]
By inspection, the Rost multiplier is $1$ in this case, so the non-trivial degree $3$ indecomposable invariant for $\mathrm{SL}_8/\mu_2$ is a restriction of the Rost invariant of $E_7$. We also remark here that the invariant $\Delta(A,\sigma)$ described in \cite{GPT} is a generator for the indecomposable invariants of $\mathrm{PSp}_8$ and thus of $\mathrm{SL}_8/\mu_2$ by Theorem \ref{thm:SL2mu2}. The decomposable invariants $x \in F^{\times}/F^{\times 2}$ are defined on an element $y \in H^1(F,\SL_8/\mu_2)$ as $y \mapsto (x) \cup \partial(y)$ where $\partial: H^1(F,\SL_8/\mu_2) \rightarrow  H^2(F,\mu_2)$. From the definition of $\Delta(A,\sigma)$ it is difficult to say much more about it. However, in a recent preprint, Demba Barry \cite{Barry} has shown that the image of this element in $\frac{H^3(F,\mu_2)}{[A] \cdot F^{\times}}$ is non-zero for an indecomposable algebra of degree $8$ and exponent $2$. That is, not only is $\Delta$ not generated by decomposable invariants, it cannot even be written in a similar manner using cup products.

\subsection*{An Explicit Invariant for $\mathrm{SL}_{9}/\mu_3$}

Similarly to the previous example, the indecomposable invariants of $\mathrm{SL}_{9}/\mu_3$ can be described via a simply connected group of type $E_8$. Again, the Borel-de Siebenthal theory shows that by deleting the vertex labeled $3$ which does not disconnect the Dynkin diagram, there is an inclusion of groups
\[\mathrm{SL}_9/\mu_3 \hookrightarrow \mathrm{E}_8.\]
As above, the Rost multiplier for the induced map on simply connected groups is $1$. Therefore, a generator for the degree $3$ indecomposable invariants of $\mathrm{SL}_9/\mu_3$ is the restriction of the Rost invariant of $E_8$. Moreover, the $3$-torsion part of $\mathrm{Inv}_{\mathrm{norm}}(E_8,\mathbb{Q}/\Z(2))$ is isomorphic to $\Z/3\Z$ \cite[Part 2, Thm.~16.8]{GMS}, so the restriction gives a splitting:
\[\mathrm{Inv}^3(\mathrm{SL}_9/\mu_3,\mathbb{Q}/\Z(2))_{\mathrm{norm}} \simeq F^{\times}/F^{\times 3} \oplus \Z/3\Z.\]

\section{$\Hsp$}

\begin{thm}
We have 
\[\mathrm{Inv}^3(\Hsp_{4n},\mathbb{Q}/\mathbb{Z}(2))_{\mathrm{ind}}\cong \begin{cases} 0&\quad \text{if $n>1$ is odd or $n=2$}\\
2\Z/4\Z& \quad\text{if $n\equiv 2\mod 4$ and $n\neq 2$}\\ 
\Z/4\Z& \quad\text{if $n\equiv 0\mod 4$}
\end{cases}\] 
\end{thm}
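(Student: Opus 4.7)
\medskip\noindent\textbf{Plan.} By the lemma of Section 3, $\Invi\cong Q(G)/\Dec(G)$ is cyclic of order $n_G/\ell$, where $Q(G)=\ell q\Z$ and $\Dec(G)=n_Gq\Z$. It therefore suffices to compute the pair $(\ell,n_G)$ for $G=\Hsp_{4n}$.

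For $\ell$, I would use the fact that the character lattice of $\Hsp_{4n}$ (type $D_{2n}$) is $\Lambda=\Lambda_r+\Z\omega$, where $\omega=\tfrac{1}{2}(e_1+\cdots+e_{2n})$ is the half-spin fundamental weight distinguishing $\Hsp_{4n}$ from $\SO_{4n}$ and the other $\Hsp$. Normalizing $q$ so that $q(\alpha^\vee)=1$ on every coroot gives $q=\tfrac{1}{2}\sum e_i^2$; one computes directly $q(\omega)=n/4$, while the associated bilinear form pairs $\omega$ integrally with $\Lambda_r$. Thus $\ell$ is the smallest positive integer with $\ell\cdot n/4\in\Z$, giving $\ell=4$ if $n$ is odd, $\ell=2$ if $n\equiv 2\pmod 4$, and $\ell=1$ if $n\equiv 0\pmod 4$.

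For $n_G$, the adjoint representation $V_{\omega_2}$ and the representation $V_{2\omega_1}$ both have highest weight in $\Lambda_r\subset\Lambda$; the Casimir formula $n_\lambda=\dim V_\lambda\cdot(\lambda,\lambda+2\rho)/\dim\mathfrak{g}$ gives Dynkin indices $4(2n-1)$ and $4(2n+1)$ respectively, and since $\gcd(2n-1,2n+1)=1$ we conclude $n_G\mid 4$. Combined with $\ell\mid n_G$ this forces $n_G=4$ whenever $\ell=4$ (i.e., $n$ odd, $n\geq 3$), so $\Invi=0$ there. The special case $n=2$ is handled by the half-spin rep $V_\omega$ itself, with Dynkin index $2^{2n-3}=2$, forcing $n_G\leq 2$; combined with $\ell=2$, we get $n_G=2$, so $\Invi=0$.

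The remaining cases, $n\equiv 2\pmod 4$ with $n>2$ and $n\equiv 0\pmod 4$, reduce to showing $4\mid n_G$, equivalently that every irreducible $V_\lambda$ of $\Hsp_{4n}$ has Dynkin index divisible by $4$. I would split on the coset of $\lambda$ in $\Lambda/\Lambda_r$: if $\lambda\in\Lambda_r$ then $V_\lambda$ descends to $\PGO_{4n}^+$, and Merkurjev's calculation \cite[Thm.~4.6]{ssinv2} gives $n_{\PGO_{4n}^+}=4$. If $\lambda\in\omega+\Lambda_r$, I would induct on the dominance order; the base case is $V_\omega$ with Dynkin index $2^{2n-3}$, divisible by $4$ once $n\geq 3$. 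For $\lambda=\omega+\mu$ with $\mu\in\Lambda_r$ dominant, the tensor decomposition $V_\omega\otimes V_\mu=V_\lambda\oplus\bigoplus_{\nu<\lambda}V_\nu$ has all summands with highest weight in $\omega+\Lambda_r$; applying the identity $n(V\otimes W)=\dim(W)\,n(V)+\dim(V)\,n(W)$, together with $4\mid\dim V_\omega=2^{2n-1}$, $4\mid n(V_\omega)$ (valid for $n\geq 3$), and $4\mid n(V_\mu)$ (the first case applied to $\mu$), shows $4\mid n(V_\omega\otimes V_\mu)$; the inductive hypothesis then forces $4\mid n(V_\lambda)$. The main anticipated obstacle is precisely this inductive step for spin-coset representations, which fails at $n=2$ (where $n(V_\omega)=2$), consistent with the exceptional value $\Invi=0$ there (also visible directly via triality, $\Hsp_8\cong\SO_8$).
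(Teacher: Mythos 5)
Your computation of $\ell$ via $q(\omega)=n/4$ matches the paper's (which uses the equivalent element $\tau=\tfrac12\sum_{i\text{ odd}}\alpha_i$). Your upper bound $n_G\mid 4$ via the pair of $\Lambda_r$-coset representations $\mathfrak{g}$ and $V_{2\omega_1}$ with indices $4(2n-1)$ and $4(2n+1)$ is a nice, self-contained alternative to the paper's citation of $n_{\PGO_{4n}}=4$, and your treatment of $n$ odd (forced by $\ell\mid n_G\mid 4$) and of $n=2$ (via $n(V_\omega)=2$) is correct.

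However, the inductive step for the spin coset has a genuine gap. You induct over dominant $\lambda\in\omega+\Lambda_r$ by writing $\lambda=\omega+\mu$ with $\mu\in\Lambda_r$ dominant, but such a decomposition does not always exist. Concretely, for $D_{2n}$ with $\omega=\omega_{2n}$, the weight $\lambda=\omega_1+\omega_{2n-1}$ lies in $\omega+\Lambda_r$ (one checks $\lambda-\omega_{2n}=e_1-e_{2n}\in\Lambda_r$), yet $\lambda-\omega_{2n}=\omega_1+\omega_{2n-1}-\omega_{2n}$ is not dominant. More generally, when $\lambda$ sits on a wall, only a non-dominant Weyl translate $w\omega$ can be subtracted to land in the dominant cone, and then $V_\lambda$ appears inside $V_\omega\otimes V_{\lambda-w\omega}$ not as the top constituent but as a lower one, so the additivity identity for $n(V_\omega\otimes V_\mu)$ no longer isolates $n(V_\lambda)$ from $n(V_{\mu+\omega})$. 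Thus your induction does not reach all dominant weights in the coset, and it is precisely these boundary weights that escape. The obstacle you flagged (failure at $n=2$) is real but separate: it reflects that $n(V_\omega)=2^{2n-3}$ is not divisible by $4$ when $n=2$, whereas the gap above persists for all $n$.

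The paper sidesteps this entirely by quoting \cite[Part~2, Lem.~15.3]{GMS}, which states that the gcd $n^+$ of the Dynkin indices over the half-spin coset equals $2^{2n-3}$; since this gcd already divides every $n(V_\lambda)$ with $\lambda\in\omega+\Lambda_r$, the divisibility $4\mid n^+$ (valid for $n\geq 3$) immediately gives what you want, with no induction required. The cleanest fix for your writeup is therefore simply to cite this lemma in place of the inductive argument; alternatively you would need to refine the induction, e.g.\ by strong induction on $\langle\lambda,\rho^\vee\rangle$ together with an argument that controls all constituents of $V_\omega\otimes V_\mu$ simultaneously rather than just the top one, which is essentially re-deriving the GMS computation.
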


\begin{proof}
This follows from equations (\ref{eqn:QHSp}) and (\ref{eqn:nHSp}) below.
\end{proof}

\begin{cor}\label{cor:Hsp16}
We have
\[\mathrm{Inv}^3(\Hsp_{16},\mathbb{Q}/\mathbb{Z}(2))_{\mathrm{norm}}\cong F/F^{\times2}\oplus \Z/4\Z.\]
\end{cor}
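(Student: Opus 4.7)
The plan is to combine the two preceding results and then exhibit a splitting of the resulting short exact sequence.

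Setting $n = 4$ in the theorem just stated gives $\Invi \cong \Z/4\Z$ for $\Hsp_{16}$. On the decomposable side, the universal cover of $\Hsp_{4n}$ is $\Spin_{4n}$, and the kernel $C$ of this cover is one of the two order-two central subgroups of $\Spin_{4n}$, so $\hat{C}(F) \cong \Z/2\Z$ and the proposition of Section 2 yields $\Invd \cong \hat{C}\otimes F = F^{\times}/F^{\times 2}$. The short exact sequence
$$0 \to \Invd \to \Invn \to \Invi \to 0$$
thus specializes to
$$0 \to F^{\times}/F^{\times 2} \to \Invn \to \Z/4\Z \to 0,$$
and the corollary reduces to showing this sequence splits.

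To produce a splitting, I would construct an explicit normalized invariant $J \in \Invn$ whose image in $\Invi \cong \Z/4\Z$ is a generator and whose order in $\Invn$ is exactly four. One natural source, advertised in the introduction, is the formula expressing an indecomposable invariant of $\Hsp_{4n}$ as a pairing of the $\PSp_{2n}$-invariant of \cite{GPT} with the Rost invariant of a twisted $\Spin$ group; specialized to $n = 4$, this produces a candidate $J$. An alternative, mirroring the approach used in the $\SL_8/\mu_2$ and $\SL_9/\mu_3$ examples, is to apply Borel--de Siebenthal to obtain the inclusion $\Hsp_{16} \hookrightarrow E_8$ and restrict the $2$-primary component of the Rost invariant of $E_8$ (whose total order is $60$, with $2$-part equal to $4$) to produce an invariant of $\Hsp_{16}$.

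The principal obstacle is verifying that this candidate invariant has order exactly four in $\Invn$ and descends to a generator of $\Invi$; this is nontrivial because \emph{a priori} its image in $\Invi$ might be a multiple of the generator, or its order in $\Invn$ might jump to $8$ against an element of $\Invd$. Granting this verification, combining the section $1 \mapsto J$ with the already-known inclusion $F^{\times}/F^{\times 2} = \Invd \hookrightarrow \Invn$ furnishes the direct sum decomposition $\Invn \cong F^{\times}/F^{\times 2} \oplus \Z/4\Z$ asserted in the corollary.
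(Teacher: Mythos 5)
Your reduction to a splitting problem is exactly right, and your second proposed candidate is precisely what the paper uses: Corollary \ref{cor:Hsp16} is proved by citing \cite{G:deg16}, where the invariant $e_3$ of $\Hsp_{16}$ is constructed by restricting the Rost invariant of $E_8$ along the Borel--de~Siebenthal inclusion $\Hsp_{16}\hookrightarrow E_8$ and is shown there to have order exactly $4$. The verification you flag as the ``principal obstacle'' is thus outsourced to that reference; with the citation in hand your argument is complete and matches the paper's.

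One caution about your first proposed candidate. The formula expressing an indecomposable invariant of $\Hsp_{4n}$ as a pairing of $\Delta$ with the Rost invariant of a twisted $\Spin$ group (equation (\ref{eqn:res}) of this paper) is established only in $H^3(F,\Z/4\Z)/P$, i.e.\ modulo the decomposable subgroup; it therefore cannot directly furnish a section of $\Invn\to\Invi$. In fact the remark following that proposition points out that it is Corollary~\ref{cor:Hsp16} which allows one to promote the identity from $H^3/P$ to an honest equality in $H^3(F,\Z/4\Z)$ in the $n=4$ case, so using that route here would be circular. Stick with the $E_8$-restriction.
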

\begin{proof}
The invariant $e_3$ for $\Hsp_{16}$ constructed in \cite{G:deg16} via the inclusion $\Hsp_{16}\to E_8$, gives the required splitting. 
\end{proof}

\subsection{$Q(G)$ for $\Hsp$}

As in the previous case, for $G$ of type $D_{2n}$ all the roots have the same lengths, and $\Lambda$ is generated by the coroots and the additional element $\tau=\frac{1}{2}\sum_{i \text{ odd}} \alpha_i$. The quadratic form $q$ is given by
\[q=\sum_{i=1}^{2n} w_i^2-\sum_{i=1}^{2n-2}w_iw_{i+1}-w_{2n-2}w_{2n}.\]
Computing as before, $q(\tau)=\frac{n}{4}$, and it follows that
\begin{equation}\label{eqn:QHSp}
\ell=\begin{cases}
1&\quad\text{if $n\equiv 0\mod 4$.}\\
2&\quad\text{if $n\equiv 2\mod 4$.}\\
4&\quad\text{if $n$ is odd.}
\end{cases}
\end{equation}

\subsection{$\Dec(G)$ for $\Hsp_{4n}$}

Let $\tilde{G}\to G\to \overline{G}$ be the standard central isogenies where $\tilde{G}$ is the simply connected cover of $G$ and $\overline{G}=G/C(G)$ is adjoint. We have the following relationship for the Dynkin indices: 
\[n_{\tilde{G}}\mid n_G\mid n_{\overline{G}}.\]
This follows from the definition: $n_{\tilde{G}}$ is the $\gcd$ over all representations given by highest weights in $\Lambda_w$, whereas $n_G$ is the $\gcd$ taken over all representations whose weights vanish on the kernel of the map $\tilde{G}\to G$. Similarly $n_{\overline{G}}$ is the gcd of representations whose highest weight vanishes on all of the kernel of $\tilde{G}\to \overline{G}$. Note that for $\tilde{G}$, the Dynkin index is equal to the order of the group of degree 3 invariants see \cite[Part 2, Thm.~10.7]{GMS}. Applying this to the case where $G=\Hsp_{4n}$ we get that
\[2\mid n_{\Hsp_{4n}}\mid 4,\]
since we have $n_{\overline{G}}=n_{\PGO_{4n}}=4$ from \cite{ssinv2} and $n_{\tilde{G}}=n_{\Spin_{4n}}=2$ or $4$ from \cite[Appendix B]{GMS}. 

Now let $\chi$ be a fundamental weight of $\Spin_{4n}$, $C$ the center of $\Spin_{4n}$; then $C^*_{\mathrm{sep}}$ consists of four elements, $0, \lambda, \lambda^+$ and $\lambda^-$ (see \cite[p.~146]{GMS}). Put $n_0,n^+$ for the $\gcd(N(\chi))$ where the $\gcd$ is taken over all characters that restrict to 0, and $\lambda^+$ respectively, then one has that $n_{\Hsp_{4n}}=\gcd(n_0,n^+\cdot \mathrm{ind}(C^+))$. Furthermore from \cite[Part 2, Lem.~15.3]{GMS}, we know that $n^+=2^{2n-3}$ and $n_0$ is divisible by 4, and this implies that 
\begin{equation}
\label{eqn:nHSp}
\begin{aligned}
&n_{\Hsp_8}=2.\\ 
&n_{\Hsp_{4n}}=4 \text{ for $n>2$.}
\end{aligned}
\end{equation}

\section{Restriction of Invariants to Subgroups}

\subsection{Restrictions in terms of $Q(G)/\Dec(G)$}

Consider the following diagram of groups 
\begin{equation}\label{eqn:rest}\xymatrix{
\mu_2\ar[d]\ar@{=}[r]&\mu_2\ar[d]\ar@{=}[r]&\mu_2\ar[d]\\
\Sp_{2n}\ar[d]\ar@{^{(}->}[r]&\SL_{2n}\ar[d]\ar@{^{(}->}[r]&\Spin_{4n}\ar[d]\\
\PSp_{2n}\ar@{^{(}->}[r]&\SL_{2n}/\mu_2\ar@{^{(}->}[r]&\Hsp_{4n}}
\end{equation}
where the left vertical sequence is just the standard isogeny. The inclusions 
\[\Sp_{2n}\subset\SL_{2n}\subset\Spin_{4n}\]
can be easily described. The first was treated above. The second comes from deleting an appropriate end vertex of the Dynkin diagram of type $D_{2n}$ to obtain a diagram of type $A_{2n-1}$ in such a way that $\mu_2$ will still sit inside all groups, and we will be able to obtain the inclusion $\SL_{2n}/\mu_2\subset \Hsp_{4n}$ as claimed. Now notice that by right side of diagram (\ref{eqn:decinv}) and the previous results we get a diagram
\[\xymatrix{
\mathrm{Inv}^3(\Hsp_{4n},\mathbb{Q}/\mathbb{Z}(2))_{\mathrm{ind}}\ar[d]^{\simeq}\ar[r]&\mathrm{Inv}^3(\SL_{2n}/\mu_2,\mathbb{Q}/\mathbb{Z}(2))_{\mathrm{ind}}\ar[d]^{\simeq}\ar[r]&\mathrm{Inv}^3(\PSp_{2n},\mathbb{Q}/\mathbb{Z}(2))_{\mathrm{ind}}\ar[d]^{\simeq}\\
Q(\Hsp_{4n})/\Dec(\Hsp_{4n})\ar[r]&Q(\SL_{2n}/\mu_2)/\Dec(\SL_{2n}/\mu_2)\ar[r]&Q(\PSp_{2n})/\Dec(\PSp_{2n})
}\]
The top row of this diagram gives the restriction of the generator of the group of indecomposable invariants of $\Hsp_{4n}$ to $\SL_{2n}/\mu_2$ and the restriction of the generator of the group of indecomposable invariants of $\SL_{2n}/\mu_2$ to $\PSp_{2n}$ which was described above. We now compute the other restriction by using the bottom row of the diagram. 

Notice that the only interesting case is when $n\equiv 0\mod 4$ since if $n$ is odd then all of the groups of indecomposable invariants are trivial, and there is nothing to say. Similarly if $n\equiv 2\mod 4$ then $\mathrm{Inv}^3(\Hsp_{4n},\mathbb{Q}/\mathbb{Z}(2))_{\mathrm{ind}}\cong\Z/2\Z$ as shown above. However, the other two groups are both trivial, so the generator restricts trivially in these cases. To finish we note that, since the coroots of $\SL_{2n}$ are also coroots of $\Spin_{4n}$ and they all have the same length, the Rost multiplier of the inclusion $\SL_{2n}\hookrightarrow \Spin_{4n}$ is 1. This means that the map
\[\Z/4\Z \simeq Q(\Hsp_{4n})/\Dec(\Hsp_{4n})\rightarrow Q(\SL_{2n}/\mu_2)/\Dec(\SL_{2n}/\mu_2) \simeq \Z/2\Z\]
maps $1\in \Z/4\Z$ to $1\in \Z/2\Z$. Therefore, a generator of the group of indecomposable invariants of $\Hsp_{4n}$ restricts to a generator of the indecomposable invariants of $\SL_{2n}/\mu_2$.

We can go further and note that since we have 
\[\mathrm{Inv}^3(\Hsp_{4n},\mathbb{Q}/\mathbb{Z}(2))_{\mathrm{dec}}\simeq \mathrm{Inv}^3(\SL_{2n}/\mu_2,\mathbb{Q}/\mathbb{Z}(2))_{\mathrm{dec}}\simeq \mathrm{Inv}^3(\PSp_{2n},\mathbb{Q}/\mathbb{Z}(2))_{\mathrm{dec}}\]
by applying the five lemma to diagram (\ref{eqn:decinv}) we get that the map
\begin{align*}
&\mathrm{Inv}^3(\Hsp_{4n},\mathbb{Q}/\mathbb{Z}(2))_{\mathrm{norm}}\to \mathrm{Inv}^3(\SL_{2n}/\mu_2,\mathbb{Q}/\mathbb{Z}(2))_{\mathrm{norm}}
\end{align*}
is onto.
\subsection{Explicit description of restrictions}

Consider diagram (\ref{eqn:rest}), by looking at the long cohomology exact sequence we get a diagram:

\[\xymatrix{
H^1(F,\Sp_{2n})\ar[r]\ar[d]&H^1(F,\PSp_{2n})\ar[d]\ar[r]&H^2(F,\mu_2)\ar@{=}[d]\\
H^1(F,\SL_{2n})\ar[r]\ar[d]&H^1(F,\SL_{2n}/\mu_2)\ar[d]\ar[r]&H^2(F,\mu_2)\ar@{=}[d]\\
H^1(F,\Spin_{4n})\ar[r]&H^1(F,\Hsp_{4n})\ar[r]&H^2(F,\mu_2)
}\]

Now let $x\in H^1(F,\Hsp_{4n})$ be a class mapping to a class $[A]\in H^2(F,\mu_2)$ with index dividing $2n$; $z$ and element of $H^1(F,\PSp_{2n})$ which also maps to $[A]$, and let $x_0$ be the image of $z$ in $H^1(F,\Hsp_{4n})$. Twisting the bottom row of this diagram by $x_0$ and putting $\Spin_{x_0}$ for the twist of $\Spin_{4n}$ we find a cocycle $y\in H^1(F,\Spin_{x_0})$ which maps to $x\in H^1(F,\Hsp_{4n})$.

Notice that the map on indecomposable invariants induced by the quotient $\Spin_{x_0}\to \Hsp_{x_0}$ is onto because the group $Q(\Hsp_{x_0})$ contains the generator $q$ of $Q(\Spin_{x_0})$. That is, there is a generator $e_3'$ of $\mathrm{Inv}^3(\Hsp_{x_0},\mathbb{Q}/\mathbb{Z}(2))_{\mathrm{ind}}$ which maps to the Rost invariant $e_3^{\Spin}$ of $\Spin_{x_0}$. Now let $e_3$ be the image of $e_3'$ in $\mathrm{Inv}^3(\Hsp_{4n},\mathbb{Q}/\mathbb{Z}(2))_{\mathrm{ind}}$ under the isomorphism described in \cite[p.~14]{ssinv2}. We get an equation
\[e_3(x)=e_3^{\Spin}(y)+e_3(x_0)\in H^3(F,\Z/4\Z)/P,\]
where $P$ here is the subgroup defined in section 3 above, namely the subgroup generated by cup products of Tits algebras with elements of the field. By the results of the last section we obtain:
\begin{prop}
Let $\Delta$ denote the invariant of $\mathrm{Inv}^3(\PSp_{2n},\mathbb{Q}/\mathbb{Z}(2))_{\mathrm{norm}}$ (this invariant was first constructed in \cite{GPT}), by the results of the last section we obtain the equation
\begin{equation}\label{eqn:res}
e_3(x)=\Delta(z)+e_3^{\Spin}(y)\, \in H^3(F,\Z/4\Z)/P.
\end{equation}
\end{prop}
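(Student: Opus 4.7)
The plan is essentially to combine the two derivations already spelled out in the paragraph preceding the proposition: the twisting identity for $e_3$ with respect to the cocycle $x_0$, and the computation of the restriction map on indecomposable invariants from the previous subsection.

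First, I would record the equation
\[e_3(x) = e_3^{\Spin}(y) + e_3(x_0) \in H^3(F,\Z/4\Z)/P,\]
which was already obtained above by twisting the bottom row of diagram (\ref{eqn:rest}) by $x_0$, lifting $x$ to $y \in H^1(F,\Spin_{x_0})$, and invoking the isomorphism of \cite[p.~14]{ssinv2} that identifies indecomposable invariants of $\Hsp_{x_0}$ with those of $\Hsp_{4n}$, together with the surjection $\Spin_{x_0}\to \Hsp_{x_0}$ on indecomposable invariants.

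Second, I would identify $e_3(x_0)$ with $\Delta(z)$. By construction $x_0$ is the image of $z \in H^1(F,\PSp_{2n})$ under the composition $H^1(F,\PSp_{2n}) \hookrightarrow H^1(F,\SL_{2n}/\mu_2) \hookrightarrow H^1(F,\Hsp_{4n})$ read off from diagram (\ref{eqn:rest}), so functoriality of the sequence of decomposable and indecomposable invariants (diagram (\ref{eqn:decinv})) gives
\[e_3(x_0) = \bigl(\res^{\Hsp_{4n}}_{\PSp_{2n}} e_3\bigr)(z).\]
The previous subsection computed this restriction at the level of $Q(G)/\Dec(G)$: in the only non-trivial case, namely $n \equiv 0 \pmod 4$, the generator of $Q(\Hsp_{4n})/\Dec(\Hsp_{4n}) \simeq \Z/4\Z$ is sent to the generator of $Q(\PSp_{2n})/\Dec(\PSp_{2n}) \simeq \Z/2\Z$, and this generator is precisely $\Delta$ by \cite{GPT}. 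Summing the two identities gives the claimed formula.

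The main point of attention is that the equality must hold in the quotient modulo $P$, so one has to check that the chosen representatives for the indecomposable invariants agree up to cup products of Tits algebras with field elements. This is ensured by the observation in the previous subsection that the decomposable invariants of $\Hsp_{4n}$, $\SL_{2n}/\mu_2$, and $\PSp_{2n}$ match under the maps in diagram (\ref{eqn:decinv}), so any ambiguity in lifting from $\mathrm{Inv}^3_{\mathrm{ind}}$ to $\mathrm{Inv}^3_{\mathrm{norm}}$ is absorbed into $P$ and does not affect the identity.
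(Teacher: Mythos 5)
Your proposal follows the same two-step structure the paper uses: quote the twisting identity $e_3(x) = e_3^{\Spin}(y) + e_3(x_0)$ derived in the paragraph before the proposition, then identify $e_3(x_0)$ with $\Delta(z)$ using the restriction results of the preceding subsection and functoriality (diagram (\ref{eqn:decinv})). The only cosmetic difference is that you state the restriction $Q(\Hsp_{4n})/\Dec(\Hsp_{4n}) \to Q(\PSp_{2n})/\Dec(\PSp_{2n})$ in one step, whereas the paper factors it through $Q(\SL_{2n}/\mu_2)/\Dec(\SL_{2n}/\mu_2)$ (using the isomorphism on indecomposables from Theorem~\ref{thm:SL2mu2} for the second leg); these give the same generator-to-generator map, so the argument is correct and essentially identical to the paper's.
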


\begin{rmk}
The case $n=4$ of the previous proposition was first proven in \cite[Cor.~10.2]{G:deg16}. For this case, Corollary \ref{cor:Hsp16} allows us to strengthen the statement of the proposition to an equation
\begin{equation}
e_3(x)=\Delta(z)+e_3^{\Spin}(y)\, \in H^3(F,\Z/4\Z).
\end{equation}
\end{rmk}

\section{Algebras with orthogonal involution in $I^3$}

In \cite{G:deg16}, Garibaldi uses a construction of a degree 3 invariant of $\Hsp_{16}$ to define an invariant for central simple algebras $(A,\sigma)$ of degree 16 with an orthogonal involution and to deduce some nice properties. We now wish to show how the results of this paper allow us to recover and extend some of those results to algebras of degree any multiple of 16.

Let $(A,\sigma)$ be a central simple algebra with orthogonal involution over a field $F$ of characteristic not 2. Over the function field $F_A$ of the Severi-Brauer variety of $A$, the involution $\sigma$ is adjoint to a quadratic form $q_\sigma$ determined up to similarity. We say $\sigma$ \emph{is in $I^n$}, if $q_\sigma$ is in $I^n$, where $I$ denotes the fundamental ideal in the Witt ring of $F_A$.

To relate these algebras with the results of this paper we recall that by \cite[Lem.~4.1]{G:deg16} the pairs $(A,\sigma)$ which lie in $I^3$ are exactly those that are in the image of the map $H^1(k,\Hsp_{4n})\to H^1(k,\PGO_{4n})$.

Now let $e_3$ be a generator of $\mathrm{Inv}^3(\Hsp_{4n},\mathbb{Q}/\mathbb{Z}(2))_{\mathrm{ind}}$, and for a given pair $(A,\sigma)$ fix an element $\eta\in H^1(F,\Hsp_{4n})$ which maps to $(A,\sigma)$. Define
\[e_3(A,\sigma):=e_3(\eta)\in H^3(F,\Q/\Z(2))/[A]\cdot H^1(F,\mu_2).\]
We note that the quotient on the right hand side implies that the value of $e_3(A,\sigma)$ does not depend on the choice of $\eta$ since we have the sequence:
\[H^1(F,\mu_2)\to H^1(F,\Hsp_{4n})\to H^1(F,\PGO_{4n})\]

Put $E(A):=\ker(H^3(F,\Z/4\Z)\to H^3(F_A,\Z/4\Z))$, and let $P=[A]\cdot H^1(F,\mu_2)$ as before. Clearly $P\subset E(A)$ since $A$ splits over $F_A$. We now obtain the following generalization of \cite[Thm.~2.6, Cor.~2.8]{G:deg16}

\begin{thm}
Let $(A,\sigma)$ be a central simple algebra with orthogonal involution of degree divisible by 16. Then there exists an invariant $e_3(A,\sigma)\in H^3(F,\Q/\Z(2))/E(A)$ such that if $K/F$ splits $A$, then $\res_{K/F} e_3(A,\sigma)$ is the Arason invariant $e_3(q_{\sigma\otimes K})$. Furthermore $(A,\sigma)$ is in $I^4$ if and only if $e_3(A,\sigma)$ is zero.
\end{thm}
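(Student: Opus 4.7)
The plan is to build $e_3(A,\sigma)$ by evaluating the generator $e_3 \in \mathrm{Inv}^3(\Hsp_{4n},\Q/\Z(2))_{\mathrm{ind}}$ from the preceding section on a lift of $(A,\sigma)$ and reducing modulo $E(A)$; one then has to verify (i) well-definedness, (ii) compatibility with the Arason invariant after splitting $A$, and (iii) the $I^4$ criterion. Since $16 \mid 4n$ forces $n \equiv 0 \pmod 4$, one has $\mathrm{Inv}^3(\Hsp_{4n},\Q/\Z(2))_{\mathrm{ind}} \cong \Z/4\Z$; fix a generator $e_3$. By \cite[Lem.~4.1]{G:deg16} each $(A,\sigma) \in I^3$ lifts to some $\eta \in H^1(F,\Hsp_{4n})$; put $e_3(A,\sigma) := [e_3(\eta)] \in H^3(F,\Q/\Z(2))/E(A)$. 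For (i), two lifts of $(A,\sigma)$ differ by an $H^1(F,\mu_2)$-action through the fibration $H^1(F,\Hsp_{4n}) \to H^1(F,\PGO_{4n})$, and the resulting ambiguity in $e_3(\eta)$ lies in $P = [A] \cdot H^1(F,\mu_2) \subset E(A)$ (the inclusion because $A$ splits over $F_A$).

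Step (ii) is the heart of the matter. Fix $K/F$ splitting $A$; the Tits class of $\eta_K$ then vanishes, so $\eta_K$ lifts to some $\tilde\eta_K \in H^1(K,\Spin_{4n})$, which under the standard identification corresponds (up to a hyperbolic summand) to the quadratic form $q_{\sigma \otimes K} \in I^3K$. By the surjectivity observed in the previous section --- the generator $q$ of $Q(\Spin_{4n})$ also generates $Q(\Hsp_{4n})$ modulo $\Dec$ --- the map
\[\mathrm{Inv}^3(\Hsp_{4n},\Q/\Z(2))_{\mathrm{ind}} \to \mathrm{Inv}^3(\Spin_{4n},\Q/\Z(2))_{\mathrm{norm}}\]
induced by $\Spin_{4n} \to \Hsp_{4n}$ sends our $e_3$ to the Rost invariant $e_3^{\Spin}$ of $\Spin_{4n}$. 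For $\Spin_{4n}$-torsors coming from forms in $I^3$, the Rost invariant is classically the Arason invariant, so
\[\res_{K/F} e_3(A,\sigma) = e_3^{\Spin}(\tilde\eta_K) = e_3(q_{\sigma \otimes K}),\]
which is the required compatibility.

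Given (ii), step (iii) is formal. $(A,\sigma) \in I^4$ means $q_\sigma \in I^4F_A$, which by injectivity of the Arason invariant on $I^3/I^4$ (Arason, Merkurjev--Suslin, Rost) is equivalent to $e_3(q_{\sigma \otimes F_A}) = 0$; by (ii) with $K=F_A$, this is the same as $\res_{F_A/F} e_3(A,\sigma) = 0$, i.e., $e_3(A,\sigma) \in E(A)$, which is trivial in $H^3(F,\Q/\Z(2))/E(A)$. The main obstacle is securing the two normalizations used in (ii): that the chosen generator of $\mathrm{Inv}^3(\Hsp_{4n},\Q/\Z(2))_{\mathrm{ind}}$ restricts to $e_3^{\Spin}$ under $\Spin_{4n} \to \Hsp_{4n}$ (to be checked by comparing the generators of the $Q/\Dec$ groups, or by bootstrapping from \cite{G:deg16} via the inclusion $\Hsp_{16} \hookrightarrow \Hsp_{4n}$), and that $e_3^{\Spin}$ on torsors from forms in $I^3$ is indeed the Arason invariant --- a classical compatibility. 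Once these are in hand, the rest of the argument is routine.
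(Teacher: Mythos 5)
Your proposal is correct and takes essentially the same route as the paper: define $e_3(A,\sigma)$ by evaluating a generator of $\mathrm{Inv}^3(\Hsp_{4n},\Q/\Z(2))_{\mathrm{ind}}$ on a lift $\eta$, observe well-definedness modulo $P\subset E(A)$, lift $\eta_K$ to $H^1(K,\Spin_{4n})$ over any splitting field $K$ and invoke the Rost--Arason compatibility together with the normalization fact (established in the paper's Section 6 via the $Q/\Dec$ computation) that the chosen generator restricts to $e_3^{\Spin}$, and finally reduce the $I^4$ criterion to the classical one for the Arason invariant over $F_A$. The only difference is presentational --- you make the normalization step ($n\equiv 0\bmod 4$ so $\mathrm{Inv}^3_{\mathrm{ind}}\cong\Z/4\Z$, and the surjection onto $\mathrm{Inv}^3_{\mathrm{norm}}(\Spin_{4n})\cong\Z/2\Z$) more explicit than the paper, which defers to its Section 6 and to \cite{G:deg16}.
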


\begin{proof}
The construction of the invariant was described above. The proofs of the other statements can be taken basically verbatim from \cite{G:deg16}. We sketch them here for the reader's convenience. Suppose the algebra $A$ splits over $K$, then by the results of section 6 above, there exists a class $x\in H^1(K,\Spin_{4n})$ which maps to $(A,\sigma)$ in $H^1(K,\PGO_{4n})$, and the value of the restriction $\res_{K/F}e_3(A,\sigma)$ is the value of the Rost invariant of $x$, i.e. the Arason invariant of $q_\sigma$. That $(A,\sigma)$ is in $I^4$ if and only if $e_3(A,\sigma)$ is zero then follows from the corresponding statement for $q_\sigma$ and the Arason invariant.
\end{proof}

\begin{rmk}
It is shown in \cite[Thm.~3.9]{BPQ} that a degree 3 invariant restricting to the Arason invariant does not exist for degree 8 algebras with orthogonal involution in $I^3$. These results are extended in \cite[Ex.~2.7]{G:deg16} to rule out the existence of an invariant in all degrees not covered by the theorem above.
\end{rmk}

{\footnotesize
\textit{Acknowledgments.} This article was written while the first author was a visitor at the Fields Institute in Toronto. He wishes to thank the staff at the Institute and the organizers of the Thematic Program on Torsors, Nonassociative Algebras and Cohomological Invariants for their hospitality. The second author would like to thank Emory University for its continuing support.}

\bibliography{skip_master}
	\bibliographystyle{plain}

\end{document}